\newtheorem{thm}{Theorem}[section]
\newtheorem{defi}[thm]{Definition}
\newtheorem{remark}[thm]{Remark}
\newtheorem{example}[thm]{Example}
\newtheorem{pb}[thm]{Problem}
\newtheorem{conj}[thm]{Conjecture}
\numberwithin{equation}{section}
\newcommand{\real}{{\mathbb R}}
\newcommand{\ent}{{\mathbb Z}}
\newcommand{\T}{{\mathbb T}}
\newcommand{\A}{{\mathcal A}}
\newcommand{\B}{{\mathcal B}}
\newcommand{\K}{{\mathcal K}}
\newcommand{\M}{{\mathcal M}}
\renewcommand{\a}{\alpha}
\newcommand{\D}{\Delta}
\renewcommand{\t}{\theta}
\renewcommand{\l}{\lambda}
\newcommand{\s}{\sigma}
\newcommand{\Tr}{\mbox{\rm Tr}}
\newcommand{\8}{\infty}
\newcommand{\pd}{\partial}
\newcommand{\la}{\langle}
\newcommand{\ra}{\rangle}
\newcommand{\wh}{\widehat}
\newcommand{\ri}{{\rm{i}}}
\newcommand{\be}{\begin{eqnarray*}}
	\newcommand{\ee}{\end{eqnarray*}}
\newcommand{\beq}{\begin{equation}}
	\newcommand{\eeq}{\end{equation}}
\newcommand{\beqn}{\begin{equation*}}
	\newcommand{\eeqn}{\end{equation*}}
\begin{document}
\title{Schatten classes on noncommutative tori: Kernel conditions}
	
	\thanks{{\it 2000 Mathematics Subject Classification:} Primary: 46L52, 46L51, 46L87. Secondary: 47L25, 43A99}
	
	\thanks{{\it Key words:} Quantum tori, Schatten classes}
	
	\author{Michael Ruzhansky}
	\address{Department of Mathematics: Analysis, Logic and Discrete Mathematics,
                       Ghent University, Belgium and
                       Queen Mary University of London, United Kingdom}
	\email{Michael.Ruzhansky@ugent.be}
	
	\author{Kai Zeng}
	\address{Department of Mathematics: Analysis, Logic and Discrete Mathematics,
                       Ghent University, Belgium}
	\email{kai.ZENG@ugent.be}

	\date{}
	\maketitle

	\markboth{M. Ruzhansky and K. Zeng}
	{Schatten classes on noncommutative tori: Kernel conditions}

\begin{abstract}
		In this note, we give criteria on noncommutative integral kernels ensuring that integral operators on quantum torus belong to Schatten classes. With the engagement of a noncommutative Schwartz' kernel theorem on the quantum torus, a specific test for Schatten class properties of bounded operators on the quantum torus is established. 
		
	\end{abstract}
	

	\tableofcontents
	

\section{Introduction}
Given a closed smooth manifold, the properties of the integral operators defined on it merit thorough examination. For example, finding sufficient conditions of Schwartz integral kernels to ensure the corresponding integral operators belong to different Schatten classes. Identifying such criteria in various domains is a classical problem that has been extensively scrutinized. Specifically, it is well-established that the regularity and smoothness of the kernel are intrinsically linked to the asymptotic behaviors of the singular values.

In the paper by Delgado and Ruzhansky \cite{DR14}, one presented criteria for Schatten classes of integral operators on compact smooth manifolds, their approach is based on the renowned factorization technique, particularly in the manner elucidated by O’Brien \cite{OB82}. Their results encompass compact Lie groups as a special case. The sufficient conditions for integral kernels $K(x,y)$ to belong to Schatten classes will necessitate regularity of a specific order in either $x$, $y$, or both. Their method's advantage lies in the flexibility afforded to the sets of variables upon which the kernel's regularity is imposed. Before long the authors significantly extended their main results of \cite{DR14} to Euclidean space $\mathbb{R}^n$ by means of the anharmonic oscillators in \cite{DR21}. The main results of \cite{DR21} provide a non-compact counterpart of the results in \cite{DR14}. Determining the anharmonic oscillators in the noncommutative setting is still an open question. Namely, to fill this gap if we consider the noncommutative Euclidean space and also establish the criteria for Schatten classes on it, but this will be done elsewhere.

Motivated by the work of Delgado and Ruzhansky \cite{DR14}, we aim to extend the aforementioned criteria for Schatten classes to the noncommutative setting. The inaugural fully non-commutative singular integral operator theory was given by Gonz\'{a}lez-P\'{e}rez, Junges, and Parcet \cite{GJP21}. Put differently, the very first form of an integral operator acting on a  general von Neumann algebra was articulated by the three authors. A crucial point is to define kernels over $\M \overline{\otimes} (\M)^{\rm op}$ where the second copy is the opposite algebra of the von Neumann algebra $\M$.

As a particular instance of the aforesaid definition, we choose the quantum tori $\T_\t^d$ to work with and establish the criteria for Schatten classes on it.  Quantum tori (also known as noncommutative tori and irrational rotation algebras) are landmark examples in noncommutative geometry and can be viewed as an analog of closed manifolds in the non commutative setting. Many aspects of harmonic analysis have been studied for quantum tori and were taken as fundamental examples in noncommutative geometry and noncommutative analysis, see \cite{CXY12, XXY18, MSX19}. 

In this note, we add some regularity conditions on the noncommutative kernel $k \in L^2(\T_\t^d \otimes (\T_\t^d)^{\rm op})$ by engaging the Bessel potentials on $\T_\t^d$. The regularity of kernel $k$ will give us the Schatten class property of the associated integral operator $T_k$. And we consider a bounded linear operator $T$ on $L^2(\T_\t^d)$, we first exploit a noncommutative Schwartz' kernel theorem for $T$ in section \ref{schwartz}, and then invoke the criteria we obtain in section \ref{schatten} to give a characterization of Schatten classes for $T$. Our approach is adapted from \cite{DR14} and \cite{RT16}. 
\section{Notation}\label{notation section}

\subsection{Noncommutative Tori}\label{nc tori subsection}
Harmonic analysis on noncommutative tori is an established subject. The exposition here closely follows \cite{MSX19} and \cite{XXY18}, and for the sake of brevity, we refer the reader to \cite{MSX19} and \cite{XXY18} for a detailed exposition of the topic and provide here only the definitions relevant to this text.
    
\subsubsection{Basic definitions}
Let $d\ge2$ and let $\theta=(\theta_{kj})$ be a real skew symmetric $d\times d$-matrix. The associated $d$-dimensional noncommutative
torus $\mathcal{A}_{\theta}$ is the universal $C^*$-algebra generated by $d$
unitary operators $U_1, \ldots, U_d$ satisfying the following commutation
relation
\beq \label{eq:CommuRelation}
U_k U_j = e^{2 \pi \mathrm{i} \theta_{k j}} U_j U_k,\quad j,k=1,\ldots, d.
\eeq
We will use standard notation from multiple Fourier series. Let  $U=(U_1,\cdots, U_d)$. For $m=(m_1,\cdots,m_d)\in\ent^d$ we define
$$U^m=U_1^{m_1}\cdots U_d^{m_d}.$$
A polynomial in $U$ is a finite sum
$$ x =\sum_{m \in \mathbb{Z}^d}\alpha_{m} U^{m}\quad \text{with}\quad
\alpha_{m} \in \mathbb{C},$$
that is, $\alpha_{m} =0$ for all but
finitely many indices $m \in \mathbb{Z}^d.$ The involution algebra
$\mathcal{P}_{\theta}$ of all such polynomials is
dense in $\A_{\theta}.$ For any polynomial $x$ as above we define
$$\tau (x) = \alpha_{0},$$
where $0=(0, \cdots, 0)$.
Then, $\tau$ extends to a  faithful  tracial state on $\A_{\theta}$.  Let  $\mathbb{T}^d_{\theta}$ be the $w^*$-closure of $\A_{\theta}$ in  the GNS representation of $\tau$. This is our $d$-dimensional quantum torus. The state $\tau$ extends to a normal faithful tracial state on $\mathbb{T}^d_{\theta}$ that will be denoted again by $\tau$. Recall that the von Neumann algebra $\mathbb{T}^d_{\theta}$ is hyperfinite.

There is an alternative way to understand the noncommutative tori, given $m,n \in \mathbb{Z}^d$, define a map from $\mathbb{Z}^d \times \mathbb{Z}^d$ to $\T$:
$$ \s(m,n)= e^{2\pi \ri  m^t\, \tilde\t  \,n }.$$
Here  $m^t$ denotes the transpose of $m=(m_1,\dots,m_d)$ and  $\tilde{\theta}$ is the following lower-triangular $d\times d$-matrix deduced from the skew symmetric matrix $\theta$:
\beq\label{theta-lower}
\tilde{\theta}= \begin{pmatrix}
	0 & 0 & 0&\dots & 0\\
	\t_{2,1} & 0           & 0 &\dots & 0\\
	\t_{3,1} & \t_{3,2}           & 0 &\dots & 0\\
	\vdots&\vdots&\vdots&\vdots&\vdots\\
	\t_{d-1,1} & \t_{d-1, 2}          & \t_{d-1, 3}           &\dots & 0\\
	\t_{d, 1 } & \t_{d, 2 }           & \t_{d, 3 }           &\dots & 0
\end{pmatrix}.\eeq

\medskip

Define a projective representation of $\mathbb{Z}^d$ on $\ell_2(\mathbb{Z}^d)$ induced by the above $\s$:
$$ \lambda_{\s}(m)g(n)= \s(-n,m)g(m-n).$$

Then, for $m,n \in \ent^d$, we have
\beq\label{twisted}
\l_\s(m)\l_\s(n)=\s(m,n)\l_\s(m+n).
\eeq

Consider the twisted group von Neumann algebra $\mathcal{L}_\s(\ent^d)$ generated by $\{\l_\s(m),\ m
\in \ent^d \}$, and a direct computation shows that
$$ U^m U^n= \s(m,n) U^{m+n}.$$

By Stone-von Neumann theorem, $\mathcal{L}_{\s}(\ent^d)$ is $*$-isomorphic to $\T_\t^d$ by mapping $U^m$ to $\l_\s(m)$ for $m \in \ent^d$.

The tracial state $\tau_\s$ on $\mathcal{L}_{\s}(\ent^d)$ is induced by the cyclic vector $\delta_0$, which is the dirac function taking value at the unital element $(0,...,0)$ of $\ent^d$. Namely, for any $x \in \mathcal{L}_\s(\ent^d)$, we obtain that
$$\tau_{\s}(x)=\la x \delta_0, \delta_0 \ra.$$
One can verify that for $x=\sum_{m \in \ent^d}\a_m\l_\s(m)$ with finite sum, we have $\tau_\s(x)=\a_0$. Which indicates that the two traces $\tau$ and $\tau_\s$ coincide.

Moreover, by the structure of the twisted group algebra over $\ent^d$. We can compute the matricial representation of an element $x \in \T_\t^d$. Namely, with the twisted convolution and involution on $\ell_1(\ent^d)$:
\beq\label{ts-convo}
f *_\s  g(m) =  \sum_{n\in \ent^d}  f(m-n) g(n) \sigma(m-n, n ),
\eeq
\beq\label{ts-invo}
f^{\sharp }   (m) =    \sigma(m, -m   )^*\overline{f(-m)}.
\eeq
We gain that
$$ \l_\s(f)\l_\s(g)=\l_\s(f *_\s g) $$
and
$$ \l_\s(f)g=f *_\s g,$$
for $f,g \in \ell_1(\ent^d)$.

Now, we turn to another important perspective for computing the representation of $x \in \T_\t^d$, the noncommutative Fourier transform.

The $L^p$-spaces for $p \in [1,\infty)$ on $\mathbb{T}^d_{\theta}$ are then defined as the operator $L^p$-spaces on $(L^\infty(\mathbb{T}^d_{\theta}),\tau)$,
\begin{equation*}
	L^p(\mathbb{T}^d_{\theta}) :=  L^p(L^\infty(\mathbb{T}^d_{\theta}),\tau).
\end{equation*}
Any $x\in L^1(\T^d_\theta)$ admits a formal
Fourier series:
$$x \sim \sum_{m \in\mathbb{Z}^d} \wh{x} ( m ) U^{m},$$
where \
$$\wh x( m) = \tau((U^m)^*x),\quad m \in \mathbb{Z}^d$$
are the Fourier coefficients of $x$. The operator $x$ is, of course, uniquely determined by its Fourier series.
For $x \in L^2(\mathbb{T}^d_{\theta})$, standard Hilbert space arguments show that it can be written as an $L_2$-convergent
series:
\begin{equation*}
	x = \sum_{n \in \ent^d} \wh{x}(n)U^n,
\end{equation*}
with 
\beq\label{Plancherel-qt}
\|x\|_2^2 = \sum_{n\in \ent^d} | \wh x(n) |^2 .
\eeq  
We will consider the left multiplication operators $M_x$ by mapping $y \in L_2(\T_\t^d)$ to $xy$ as operators on the Hilbert space $\ell_2(\mathbb{Z}^d)$ by virtue of the Plancherel formula \eqref{Plancherel-qt}. Take a left multiplication $M_x$, where $x \sim \sum_{m \in\mathbb{Z}^d} \wh{x} ( m ) U^{m}$. By \eqref{Plancherel-qt}, any $\eta \in L_2(\T^d_\t) $ corresponds to the element $\{ \wh \eta (n)\}_{n \in \ent^d}  \in  \ell_2 (\ent ^d )$. By \eqref{ts-invo} and \eqref{ts-convo}, a direct computation shows that the Fourier coefficient of $M_x \eta$ at $m \in  \ent^d$ is 
$$\sum_{n\in \ent^d} \s(m-n , n ) \wh{x} ( m-n ) \wh \eta (n).$$
Therefore, we can represent the operator $M_x$ as a matrix indexed by $\ent^d$:
\beq\label{Mx-matrix}
\Big( \s(m-n , n ) \wh{x} ( m-n )\Big)_{m,n \in \ent^d}.
\eeq   
The space $C^\infty(\T_\t^d)$ is defined to be the subset of $C(\T_\t^d)$ with the rapid decay sequence of Fourier coefficients $\{\wh{x}_n \}_{n \in \mathbb{Z}^d}$. There is a canonical Fr\'{e}chet topology on $C^\infty(\T_\t^d)$, the topological dual space $\mathcal{D}^{\prime}(\T_\t^d)$ of $C^\infty(\T_\t^d)$ is called the distribution space on $\T_\t^d$.
\medskip

\subsubsection{Harmonic analysis on quantum tori}\label{calculus definition subsubsection}

Many aspects of harmonic analysis on $\T^d$ carry foward to $\mathbb{T}^d_{\theta}$. In the following, we give the definitions of partial differentiations and Fourier multipliers.

\begin{defi}\label{def-diff}
	The partial differentiation operators $\pd_j$, $j = 1,\cdots,d$ on $\mathbb{T}^d_{\theta}$ are defined as:
	\begin{equation*}
		\pd_j (U^n) = 2\pi \ri n_jU^n,\quad n = (n_1,\ldots,n_d) \in \ent^d.
	\end{equation*}
\end{defi}
Every partial derivation $\pd_j$ can be viewed as a densely defined closed (unbounded)
operator on $L^2(\mathbb{T}^d_{\theta})$, whose adjoint is equal to $-\pd_j$.  Let $\Delta=\partial_1^2+\cdots+\partial_d^2$ be the Laplacian. Then $\Delta = - (\pd_1^* \pd_1 + \cdots +\pd_d^* \pd_d)$, so $-\Delta$ is a positive operator on $L^2(\mathbb{T}^d_{\theta})$ with spectrum equal to $\{4\pi^2 |n|^2  : n \in \ent^d\}$. As in the Euclidean case, we let $D_j  = -\ri \pd_j$, which is then self-adjoint.
Given $n =(n_1,\cdots,n_d)\in \mathbb{N}_0^d$ ($\mathbb{N}_0$ denoting the set of nonnegative integers), the associated partial derivation $D^n$ is defined to be
$D_1^{n_1}\cdots D_d^{n_d}$. The order of $D^n$ is   $|n|_1=n_1+\cdots+ n_d$. 
By duality, the derivations transfer to  $\mathcal{D}'(\mathbb{T}^d_{\theta})$  as well. 

\begin{defi}\label{def-FM}
	Let $g$ be a bounded scalar function on $\ent^d$. For $x \in L_2(\T_\t^d)$, the Fourier multiplier $T_g$ with symbol $g$ is defined
	on $x$ by:
	\begin{equation}\label{def-Fourier-Z}
		T_gx = \sum_{n \in \ent^d} g(n)\wh{x}(n)U^n.
	\end{equation}
\end{defi}
By virtue of the Plancherel identity \eqref{Plancherel-qt}, $T_g$ indeed defines a bounded linear operator on $L^2(\T_\t^d)$ and the above series converges in the $L^2$-sense.
If $g$ is unbounded, we may define $T_g$ on the dense subspace of $L^2(\T_\t^d)$ of those $x$ with finitely many non-zero Fourier coefficients.

More generally, if $g$ is a bounded scalar function on $\real^d$, we use the same notation $T_g$ to denote the Fourier multiplier with the symbol $g|_{\ent^d}$. If $g$ is a bounded scalar function on $\real^d\setminus \{0\}$ (or $\ent^d\setminus \{0\}$), $T_g$ is then a Fourier multiplier on the subspace of $\mathcal{D}'(\T^d_\t)$ of all $x$ such that $\wh x(0)=0$.

Typical examples of Fourier multipliers are quantum analogs of Bessel and Riesz potentials. Let $\a\in\real$. Define $J_\a$ on $\real^d$ and $I_\a$ on $\real^d\setminus\{0\}$ by
$$J_\a(\xi)=(1+|\xi|^2)^{\frac\a2}\;\text{ and }\; I_\a(\xi)=|\xi|^{\a}\,.$$
Their associated Fourier multipliers are the Bessel and Riesz potentials of order $\a$, denoted by $J^a$ and $I^\a$, respectively. By duality,  $J^\a$  is also a Fourier multiplier on $\mathcal{D}'(\T^d_\t)$, and $I^\a$ a Fourier multiplier on the subspace of $\mathcal{D}'(\T^d_\t)$ of all $x$ such that $\wh x(0)=0$. Note that
$$J^\a=(1-(2\pi)^{-2}\D)^{\frac\a2}\;\text{ and }\; I^\a=(-(2\pi)^{-2}\D)^{\frac{\a}{2}}.$$

In the same spirit with \eqref{Mx-matrix}, Fourier multipliers on $\T^d_\t$ may be regarded as diagonal matrices acting on $\ell_2 (\ent ^d )$. For instance, the potentials $I^\a$ and $J^\a$ may be represented as 
\beq\label{potentials-matrix}
{\rm{diag}}\{|m|^\a \}_{m\in \ent^d} \quad \mbox{ and  } \quad {\rm{diag}}\{(1+|m|^2)^{\frac \a  2 }\}_{m\in \ent^d}
\eeq
respectively.

It is worthwhile adverting that since the function $ n \mapsto (1+|n|^2)^{-\frac {\a}{2}} $ belongs to $\ell_{\frac {d}{\a},\infty}$, for any $\a >0$ we have
$$J^{-\a}\in S_{\frac {d}{\a},\infty}.$$
Analogously, when $\a > \frac {d}{p}$, it suggests that
$$J^{-\a} \in S_p .$$
For $\a \in \mathbb{R}$, the potential Sobolev space of order $\a$ is defined to be
\be\label{sobolev}
H_p^{\a}(\T_\t^d)=\{x \in \mathcal{D}^{\prime}(\T_\t^d): J^{\a}x \in L^p(\T_\t^d)\},
\ee 
with the norm 
$$\| x\|_{H_p^{\a}(\T_\t^d)}=\|J^{\a}x\|_p. $$
We now define the Sobolev space with mixed regularity $\a_1, \a_2 \ge0$.
\begin{defi}
Let $k \in L^2(\T_\t^d \otimes \T_\t^d)$ and $\a_1, \a_2 \ge 0$,  we say that $k \in H^{\a_1,\a_2}(\T_\t^d\otimes (\T_\t^d)^{\rm op})$ if and only if $(J^{\a_1} \otimes J^{\a_2})k \in  L^2(\T_\t^d\otimes (\T_\t^d)^{\rm op})$.
\end{defi}
\subsection{Schatten classes}\label{operator notation subsection}
   In this subsection let us briefly collect material concerning Schatten classes; for more details, we refer the reader to \cite{Simon1979,Pisier1998,Pisier2001,LSZ12}.
    Let $H$ be a complex separable Hilbert space, and let $\B(H)$ denote the set of bounded operators on $H$, and let $\K(H)$ denote the ideal of compact operators on $H$. Given $T\in \K(H)$, the sequence of singular values $\mu(T) = \{\mu(k,T)\}_{k=0}^\infty$ is defined as
    \begin{equation*}
        \mu(k,T) = \inf\{\|T-R\|\;:\;\mathrm{rank}(R) \leq k\}.
    \end{equation*}
    Equivalently, $\mu(T)$ is the sequence of eigenvalues of $|T|$ arranged in non-increasing order with multiplicities.
    
    Let $p \in (0,\infty).$ The Schatten class $S_p$ is the set of operators $T$ in $\K(H)$ such that $\mu(T)$ is $p$-summable, i.e. in the sequence space $\ell_p$. If $p \geq 1$ then the $S_p$
    norm is defined as
    \begin{equation*}
        \|T\|_p := \|\mu(T)\|_{\ell_p} = \Big(\sum_{k=0}^\infty \mu(k,T)^p\Big)^{1/p}.
    \end{equation*}
    With this norm, $S_p$ is a Banach space, and an ideal of $\B(H)$. For $p=\8$ we set $S_\8 =  \B(H)$ equipped with the operator norm.

The weak Schatten class $S_{p,\infty}$ is defined in an analogous way, by setting operators $T$ such that $\mu(T)$ is in the weak $L^{p}$ space $\ell_{p, \infty}$ with the quasi norm
\begin{equation*}
       \|T\|_{p,\infty} = \sup_{k\geq 0} (k+1)^{1/p}\mu(k,T) < \infty.
    \end{equation*}
    As with the $S_p$ spaces, $S_{p,\infty}$ is an ideal of $\B(H)$.
    
    
    Another equivalent way of defining the above Schatten classes is to consider the whole algebra $\B(H)$ with the usual trace $\Tr$; then the Schatten class $S_p$ is defined as the noncommutative $L_p$ spaces $L_p(\B(H), \Tr)$, see e.g. \cite{PX2003,Xu2007}.
\medskip
\section{Schatten properties of integral operators on noncommutative tori}
\subsection{Schwartz' kernel theorem }\label{schwratz}
Before delving into the main parts of this note, we clarify some notations.

We will denote invariably the dual brackets between $C^\infty(\T_\t^d)$ and $\mathcal{D}^{\prime}(\T_\t^d)$ by $(\cdot, \cdot)$, and the inner product on $L^2(\T_\t^d)$ by $\la \cdot, \cdot \ra$.

In the subsequent discussion, we will show that for a continuous linear operator 
$$T: C^\infty(\T_\t^d) \to \mathcal{D}^{\prime}(\T_\t^d)$$
there exists a kernel $k \in \mathcal{D}^{\prime}(\T_\t^d) \otimes \mathcal{D}^{\prime}(\T_\t^d)$ such that
$$( Tx, y )=( T_k x,y )=( k, y\otimes x).$$
Consider the space $\mathcal{A}$ of all the separately bilinear functionals $A$ on $C^\infty(\T_\t^d) \times C^\infty(\T_\t^d)$. Any distribution $\omega \in \mathcal{D}^{\prime}(\T^d_\t \otimes \T^d_\t) \simeq \mathcal{D}^{\prime}(\T^{2d}_{\t \oplus \t})$ gives rise to such a functional by specialization to the tensor product of two elements:
\beq \label{kernel}
\Lambda \omega (x,y):= \omega(x\otimes y):= A(x,y).
\eeq
The kernel theorem states that the mapping:
$$\Lambda: \omega \mapsto A $$
is a linear homomorphism between $\mathcal{D}^{\prime}(\T^{2d}_{\t \oplus \t})$ and $\mathcal{A}$.

\begin{thm}\label{schwartz kernel}
	For any continuous linear functional $A$ on $C^\infty(\T_\t^d) \times C^\infty(\T_\t^d)$ there exists precisely one distribution $\omega \in \mathcal{D}^{\prime}(\T^{2d}_{\t \oplus \t})$ such that
	\beq\label{schwartz}
 A(x,y)= ( \omega,x\otimes y)
	\eeq
	holds for all $(x,y)\in C^\infty(\T_\t^d) \times C^\infty(\T_\t^d)$. The mapping $A \mapsto \omega$ in \eqref{schwartz} is a linear homeomorphism.	
\end{thm}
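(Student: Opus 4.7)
The plan is to mimic the classical Schwartz kernel theorem on $\T^d$, exploiting the Fourier description of both sides: $C^\infty(\T_\t^d)$ is the Fréchet space of elements whose Fourier coefficients $\{\wh x(m)\}_{m \in \ent^d}$ decrease rapidly, equipped with seminorms $p_k(x) := \sup_{m \in \ent^d} (1+|m|)^k |\wh x(m)|$, and $\mathcal{D}'(\T_\t^d)$ is its topological dual, realised as formal series $\sum_m c_m U^m$ whose coefficients grow at most polynomially. The bi-torus $\T^{2d}_{\t \oplus \t}$ enjoys an analogous description indexed by $\ent^{2d}$ with orthonormal basis $\{U^m \otimes U^n\}$. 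Since the Fréchet topology is entirely controlled by the Fourier side, the whole problem reduces to a problem about sequence spaces, namely the classical kernel theorem for $\mathcal{S}(\ent^d)$, which is nuclear.

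First I would use (joint) continuity of $A$ to pick integers $N_1, N_2$ and a constant $C>0$ with
$$|A(x,y)| \le C\, p_{N_1}(x)\, p_{N_2}(y);$$
if only separate continuity is available a priori, one first upgrades to joint continuity using the nuclearity of $C^\infty(\T_\t^d) \simeq \mathcal{S}(\ent^d)$. Define $c_{m,n} := A(U^{-m}, U^{-n})$ (adjusting signs or phases to match the convention fixed for $(\cdot,\cdot)$). Since $p_k(U^m) = (1+|m|)^k$, we obtain the polynomial bound
$$|c_{m,n}| \le C\,(1+|m|)^{N_1}(1+|n|)^{N_2},$$
so the formal series $\omega := \sum_{(m,n) \in \ent^{2d}} c_{m,n}\,(U^m \otimes U^n)$ pairs with any test element of $C^\infty(\T^{2d}_{\t \oplus \t})$, whose coefficients decay rapidly, and hence defines an element of $\mathcal{D}'(\T^{2d}_{\t \oplus \t})$. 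The identity $A(x,y) = (\omega, x \otimes y)$ is then tautological on pairs of monomials in $U$, and extends by bilinearity to Fourier polynomials; for general $x, y \in C^\infty(\T_\t^d)$ one approximates by truncated Fourier sums and uses the rapid decay of $\wh x, \wh y$ together with the continuity estimate for $A$ (and the analogous estimate for pairing against $\omega$) to pass to the limit on both sides. Uniqueness of $\omega$ follows from density of the algebraic tensor product $C^\infty(\T_\t^d) \otimes C^\infty(\T_\t^d)$ in $C^\infty(\T^{2d}_{\t \oplus \t})$, which is immediate from the Fourier-series description.

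For the homeomorphism claim, I would topologise the space $\mathcal{A}$ of jointly continuous bilinear functionals by uniform convergence on products of bounded subsets of $C^\infty(\T_\t^d)$. The estimate on $|c_{m,n}|$ above shows that $\Lambda^{-1}$ is continuous, while continuity of $\Lambda$ itself is encoded in the fact that the pairing $(\omega, x \otimes y)$ is dominated by a product of seminorms of $x$ and $y$ once $\omega$ is fixed. The main obstacle I anticipate is the bookkeeping required to promote separate continuity of $A$ to joint continuity and to justify the interchange of summation and pairing in the Fourier series manipulations; both steps rely ultimately on the nuclearity of $\mathcal{S}(\ent^d)$, which is what makes the classical argument transplant verbatim to the noncommutative setting.
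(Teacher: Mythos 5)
Your proposal is correct and follows essentially the same route as the paper: both expand everything in the Fourier basis $U^m\otimes U^n$, construct $\omega$ from the values $A(U^m,U^n)$ (you record them as Fourier coefficients of $\omega$, while the paper equivalently defines $(\omega,h)=\sum_{m,n}b_{m,n}A(V^m,V^n)$ after rescaling by Bessel weights), get the required growth/decay estimate from continuity of $A$, derive uniqueness from density of the span of $U^m\otimes U^n$ in $C^\infty(\T^{2d}_{\t\oplus\t})$, and establish the homeomorphism via seminorms of uniform convergence on bounded sets. The only differences are cosmetic: you use sup-type seminorms and a joint-continuity estimate where the paper uses Cauchy--Schwarz with Bessel potentials (Sobolev seminorms), and the separate-to-joint continuity upgrade you invoke is really a consequence of $C^\infty(\T^d_\t)$ being Fr\'echet (barrelledness) rather than of nuclearity.
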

\begin{proof}
	Every element $h \in C^{\infty}(\T^d_\t \otimes \T^d_\t)$ can be written in the form 
	$$h=\sum_{m,n \in \mathbb{Z}^d}b_{m,n}U^m \otimes U^n,$$
	where 
	\beq\label{Frechet} 
	\begin{split}
		b_{m,n}&=\tau \otimes \tau(h(U^m \otimes U^n)^{*})\\
		&=\langle h, U^m \otimes U^n \rangle\\
		&=\langle (J^{-\a_1}\otimes J^{-\a_2})(J^{\a_1}\otimes J^{\a_2})h, U^m \otimes U^n \rangle\\
		&= \langle (J^{\a_1}\otimes J^{\a_2})h, (J^{-\a_1}\otimes J^{-\a_2})(U^m \otimes U^n)\rangle.
	\end{split} 
	\eeq
One note that in the previous identities, the indices $\a_1,\a_2$ are arbitrary, we replace them with $\a_1+s_0, \a_2+s_0$ respectively, where $s_0 \in \mathbb{R}$ is a constant. 
	The Cauchy-Schwartz inequality yields that
\beq\label{distribution} 
\begin{split}
|b_{m,n}|&=| \langle (J^{\a_1+s_0}\otimes J^{\a_2+s_0})h, (J^{-(\a_1+s_0)}\otimes J^{-(\a_2+s_0)})(U^m \otimes U^n)\rangle|\\
&\le \| (J^{\a_1+s_0}\otimes J^{\a_2+s_0})h \|_{L^2(\T_\t^d \otimes \T_\t^d)}\| (J^{-(\a_1+s_0)}\otimes J^{-(\a_2+s_0)})(U^m \otimes U^n)\|_{L^2(\T_\t^d \otimes \T_\t^d)}\\
&=  \| (J^{\a_1+s_0}\otimes J^{\a_2+s_0})h \|_{L^2(\T_\t^d \otimes \T_\t^d)}\| (1+|m|^2)^{-\frac {\a_1+s_0} {2}} (1+|n|^2)^{-\frac {\a_2+s_0} {2}}(U^m \otimes U^n)\|_{L^2(\T_\t^d \otimes \T_\t^d)}\\
& \lesssim \|h\|_{H^{\a_1+s_0, \a_2+s_0}}(1+|m|^2)^{-\frac {\a_1+s_0} {2}} (1+|n|^2)^{-\frac {\a_2+s_0} {2}},
\end{split}
\eeq
and choose $s_0$ to be an integer such that $\sum_{m\in \mathbb{Z}^d}(1+|m|^2)^{-\frac {s_0}{2}}<\infty $.
	
	We can rewrite $ h$ as
	\beq\label{expan}
	\begin{split}
		h=&\sum_{m,n \in \mathbb{Z}^d}(1+|m|^2)^{\frac {\a_1} {2}} (1+|n|^2)^{\frac {\a_2} {2}}b_{m,n}((1+|m|^2)^{-\frac {\a_1} {2}}U^m \otimes (1+|n|^2)^{-\frac {\a_2} {2}}U^n )\\
		&= \sum_{m,n \in \mathbb{Z}^d} a_{m,n} V^m \otimes V^n,
	\end{split}
	\eeq
where $\a_1,\a_2\ge 0$ and 
\beq\label{expansion} 
\begin{split}
a_{m,n} &=(1+|m|^2)^{\frac {\a_1} {2}} (1+|n|^2)^{\frac {\a_2} {2}}b_{m,n},\\
V^m &= (1+|m|^2)^{-\frac {\a_1} {2}}U^m, \ V^n= (1+|n|^2)^{-\frac {\a_2} {2}}U^n .
\end{split}
\eeq
Note that
$$ |a_{m,n}|\lesssim \|h\|_{H^{\a_1+s_0, \a_2+s_0}}(1+|m|^2)^{-\frac {s_0} {2}} (1+|n|^2)^{-\frac {s_0} {2}}$$
and 
$$ \|V^m\| \le 1, \ \|V^n\| \le 1$$
since $\a_1, \a_2 \ge0$.
	
We are in a position to construct $\omega$, define
	$$( \omega,h )= \sum_{m,n \in \mathbb{Z}^d}a_{m,n}A(V^m,V^n),$$
	we conclude from \eqref{distribution} and \eqref{expansion} that 
\be
\begin{split}
	|( \omega, h )| &= | \sum_{m,n \in \mathbb{Z}^d}a_{m,n}A(V^m,V^n)|\\
&\le \sum_{m,n \in \mathbb{Z}^d}|a_{m,n}||A(V^m,V^n)| \\
&\lesssim \sum_{m,n \in \mathbb{Z}^d} \|h\|_{H^{\a_1+s_0, \a_2+s_0}}(1+|m|^2)^{-\frac {s_0} {2}} (1+|n|^2)^{-\frac {s_0} {2}}\|A\|\\
&\le C \|h\|_{H^{\a_1+s_0, \a_2+s_0}},
\end{split}
\ee
where the constant $C$ depends on $A$ and $s_0$.
	Hence $\omega$ is a distribution on $\T_\t^d \otimes \T_\t^d$.
	It is apparently the case that $\omega$ satisfies \eqref{schwartz} and $\omega$ is uniquely determined by $A$ since the set of all the finite sums $\sum_{m,n \in \mathbb{Z}^d}b_{m,n}U^m \otimes U^n$ is dense in $C^{\infty}(\T_\t^d\otimes \T_\t^d)$.
	
	Now we show that the map $\Lambda: \omega \mapsto A$ is a linear homeomorphism. To this end, we need to show that $\Lambda$ and $\Lambda^{-1}$ are both continuous.
	
	The Fr\'{e}chet topologies on $\mathcal{A}$ and $\mathcal{D}^{\prime}(\T^{2d}_{\t \oplus \t})$ can be expressed respectively as the following seminorms:
	\beq 
	\begin{split}
		\rho_{B_{\T_\t^d},B_{\T_\t^d}}(A)&={\rm{sup}}\{|A(x,y)|, \ x,y\in B_{\T_\t^d}\},\\
		\rho_{B_{\T_\t^d\otimes \T_\t^d }}(\omega)&={\rm{sup}}\{|(\omega,h) |, \, h \in B_{\T_\t^d\otimes \T_\t^d }\},	
	\end{split}	
	\eeq
	where $B_{\T_\t^d}$ is a bounded set in $C^{\infty}(\T_\t^d)$ and $B_{\T_\t^d\otimes \T_\t^d }$ is a bounded set in $C^{\infty}(\T_\t^d \otimes \T_\t^d)$.
	
	Let $\rho_{B_{\T_\t^d},B_{\T_\t^d}}$ be an arbitrary seminorm in $\mathcal{A}$. Then
	\beq
	\rho_{B_{\T_\t^d},B_{\T_\t^d}}(\Lambda \omega)=\rho_{B_{\T_\t^d},B_{\T_\t^d}}(A) =\sup_{x,y \in{B_{\T_\t^d} }}|A(x,y)|= \sup_{x,y \in{B_{\T_\t^d} }}|( \omega, x \otimes y) |.
	\eeq
	It is clear that there exists a bounded set $B_{\T_\t^d\otimes \T_\t^d } \subset C^{\infty}(\T_\t^d \otimes \T_\t^d)$ such that $B_{\T_\t^d}\otimes B_{\T_\t^d}	\subset B_{\T_\t^d\otimes \T_\t^d }$. Hence
	$$ \sup_{x,y \in{B_{\T_\t^d} }}|( \omega, x \otimes y) |\le \sup_{h \in B_{\T_\t^d\otimes \T_\t^d }}|( \omega, h )|=\rho_{B_{\T_\t^d\otimes \T_\t^d }}(\omega),$$
	which implies that $\Lambda$ is continuous.
	
	Conversely, as for the expansion of $h$ in \eqref{expan}, since $h$ is bounded in $C^{\infty}(\T_\t^d \otimes \T_\t^d)$, we can restrict $V^m, V^n$ to be bounded in $C^{\infty}(\T_\t^d)$ and $\sum_{m,n \in \mathbb{Z}^d}|a_{m,n}|\le 1$. If $\rho_{B_{\T_\t^d\otimes \T_\t^d }}$ is a seminorm on $C^{\infty}(\T_\t^d \otimes \T_\t^d)$, we find
	$$\rho_{B_{\T_\t^d\otimes \T_\t^d }}(\Lambda^{-1}A)=\rho_{B_{\T_\t^d\otimes \T_\t^d }}(\omega)=\sup_{h \in B_{\T_\t^d\otimes \T_\t^d }}|( \omega, h )|=\sup_{h \in B_{\T_\t^d\otimes \T_\t^d }}|\sum_{m,n \in \mathbb{Z}^d}a_{m,n}A(V^m,V^n) |.$$
	Since $V^m,V^n$ are bounded, we can find a bounded set $B_{\T_\t^d}$ which contains $V^m,V^n$. Thus,
	$$\rho_{B_{\T_\t^d\otimes \T_\t^d }}(\Lambda^{-1}A)\le \sum_{m,n \in \mathbb{Z}^d}a_{m,n} \sup_{h \in B_{\T_\t^d\otimes \T_\t^d }}|A(V^m,V^n)|\le \sup_{x,y \in{B_{\T_\t^d} }}|A(x,y)|=\rho_{B_{\T_\t^d},B_{\T_\t^d}}(A),$$
	which demonstrates that $\Lambda^{-1}$ is continuous.
\end{proof}
\begin{remark}
Leveraging the seminal transference method of Chen, Xu, and Yin in \cite{CXY12}, we can transfer the problem of $\T_\t^d$ to the corresponding ones of the operator-valued function in $L^{\infty}(\T^d, \T_\t^d)$. This transference method will provide an alternative proof of Theorem \ref{schwartz kernel} by utilizing the operator-valued Schwartz' kernel theorem, a subject that has yielded a wealth of results. However, the demonstration of the operator-valued theorem is as burdensome as directly proving it on the quantum tori. Moreover, the approach by transference is deficient in the discussions on the topology of $\mathcal{D}^{\prime}(\T_\t^d)$, rendering it somewhat superficial. 
\end{remark}

By consolidating the conclusions we have proven, we can ascertain that for any continuous linear operator $T: C^{\infty}(\T_\t^d) \to \mathcal{D}^{\prime}(\T_\t^d)$, there exists a Schwartz kernel $k \in \mathcal{D}^{\prime}(\T^{2d}_{\t \oplus \t})$ such that
$$( Tx, y )=( T_k x, y ) \ \text{for} \ x,y \in C^{\infty}(\T_\t^d),$$
where $T_k$ is the operator associated to the kernel $k$ implemented by $( T_k x, y ) =( k, y \otimes x )$ for $x,y \in C^{\infty}(\T_\t^d)$.

Now for any $\t$ and $1 \le p \le \infty$, the space $L^p(\T_\t^d)$ embeds into $\mathcal{D}^{\prime}(\T_\t^d)$: an element $x\in L^p(\T_\t^d)$ derives a continuous functional on $ C^{\infty}(\T_\t^d)$ by taking $(x,y)= \tau(xy)$. A map $T \in B(L^2(\T_\t^d))$ can be viewed as a linear map from $C^{\infty}(\T_\t^d)$ to $\mathcal{D}^{\prime}(\T_\t^d)$, by plugging into Theorem \ref{schwartz kernel}, we obtain that there exists $k \in \mathcal{D}^{\prime}(\T^{2d}_{\t \oplus \t})$ such that
$$( Tx, y )=( k, y\otimes x ).$$

Since $C^{\infty}(\T_\t^d)$ is dense in $L^2(\T_\t^d)$, the above identity holds for $x, y \in L^2(\T_\t^d)$. 

It is worth noting that the trace $\tau$ on $\T_\t^d$ and $(\T_\t^d)^{\rm{op}}$ coincides and the Bessel potential $J^{\a}$ acts in the same way for both spaces. So, we have that $L^2(\T_\t^d)\simeq L^2((\T_\t^d)^{\rm op}) $. Indeed, for the opposite algebra $(\T_\t^d)^{\rm op}$, the opposite algebra structure gives the reversed product law for the elements in $\T_\t^d$, namely, if we denote by $\star$ the product operation in $(\T_\t^d)^{\rm op}$, then for the generating elements $U_1,..., U_d$, we gain that
$$U_k \star U_j =U_jU_k= e^{-2 \pi \mathrm{i} \theta_{k j}} U_k U_j= e^{-2 \pi \mathrm{i} \theta_{k j}} U_j \star U_k ,\quad j,k=1,\ldots, d.  $$
By the universal property of quantum tori, we see that $(\T_\t^d)^{\rm{op}}$ is actually isomorphic to $\T_{-\t}^d$.

If we assume that $k \in L^2(\T_\t^d \otimes (\T_\t^d)^{\rm op})$, by the above clarifications, $ L^2(\T_\t^d \otimes (\T_\t^d)^{\rm op}) \subset \mathcal{D}^{\prime}(\T^{2d}_{\t \oplus \t})$. And
\beq \label{unify}
\begin{split}
(Tx, y ) &= ( T_k x, y )\\
&=( k, y\otimes x )\\
&= \tau \otimes \tau(k(y\otimes x))\\
&= \tau(({\rm id}\otimes \tau)(k(1\otimes x))y),
\end{split}
\eeq  
which unveals that $Tx=T_k x= ({\rm id}\otimes \tau)(k(1\otimes x))$ for $x \in L^2(\T_\t^d)$.

\subsection{Schatten classes on noncommutative tori}\label{schatten}
Now, appealing to \eqref{unify}, if we demand futhermore that $k \in H^{\a_1,\a_2}(\T_\t^d \otimes (\T_\t^d)^{\rm op})$, we can deduce that
$$T \in S_r(L_2(\T_\t^d)) \ \text{for} \
r > \frac {2d} {d+2(\a_1+\a_2)}. $$
The conclusion gives us a characterization of the Schatten class property for $T \in B(L^2(\T_\t^d))$ and is presented in the following theorem.

\begin{thm}\label{schatten}
Let $\T_\t^d$ be the quantum tori of dimension $d$ with $d\ge2$ and let $a_1, \a_2 \ge0$. Suppose $k \in H^{\a_1,\a_2}(\T_\t^d \otimes (\T_\t^d)^{\rm op})$. Then the integral operator $T_k$ on $L^2(\T_\t^d) $ defined by
$$T_k(x)=({\rm id}\otimes \tau)(k(1\otimes x))= ({\rm id}\otimes \tau)((1 \otimes x)k),$$
is in the Schatten class $S_r(L^2(\T_\t^d))$ for
$$r > \frac {2d} {d+2(\a_1+\a_2)}. $$	
\end{thm}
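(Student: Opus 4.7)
The plan is to reduce the claim to the Hilbert--Schmidt case by a three-term factorization of $T_k$ through the Bessel potentials $J^{-\a_1}$ and $J^{-\a_2}$, and then to apply a H\"{o}lder inequality in the (weak) Schatten classes.

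First I would identify $T_k$ with a matrix on $\ell_2(\ent^d)$ in the orthonormal basis $\{U^m\}_{m\in \ent^d}$. Expanding the kernel as $k=\sum_{m,n}k_{m,n}\,U^m\otimes U^n$, and remembering that the second tensor factor lives in $(\T_\t^d)^{\rm op}$ so that its action on $\xi\in L^2(\T_\t^d)$ amounts to right multiplication in $\T_\t^d$, the formula $T_k(\xi)=({\rm id}\otimes\tau)(k(1\otimes\xi))$ together with the trace relation $\tau(U^pU^n)=\s(p,n)\delta_{p+n,0}$ should yield
\beqn
(T_k)_{q,j}=k_{q,-j}\,\s(j,-j),\qquad q,j\in \ent^d.
\eeqn
In particular, since $|\s(\cdot,\cdot)|=1$, the case $\a_1=\a_2=0$ already recovers the Hilbert--Schmidt identity $\|T_k\|_{S_2}=\|k\|_{L^2}$.

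Next, set $k':=(J^{\a_1}\otimes J^{\a_2})k$; by hypothesis $k'\in L^2(\T_\t^d\otimes(\T_\t^d)^{\rm op})$. The same matrix computation applied to $k'$, combined with the diagonal action $J^{-\a_i}U^m=(1+|m|^2)^{-\a_i/2}U^m$ on $L^2(\T_\t^d)$, produces the clean factorization
\beqn
T_k=J^{-\a_1}\circ T_{k'}\circ J^{-\a_2}
\eeqn
as operators on $L^2(\T_\t^d)$. Here $T_{k'}\in S_2$ with $\|T_{k'}\|_{S_2}=\|k\|_{H^{\a_1,\a_2}}$ by the preceding paragraph, while the outer factors satisfy $J^{-\a_i}\in S_{d/\a_i,\infty}$ by the observation recorded after \eqref{potentials-matrix} (the endpoint $\a_i=0$ is interpreted as $J^0={\rm id}\in\B(L^2(\T_\t^d))$).

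The conclusion then follows from the H\"{o}lder inequality for Schatten ideals,
\beqn
S_{p_1,\infty}\cdot S_{p_2}\cdot S_{p_3,\infty}\subset S_{r_0,\infty},\qquad \tfrac{1}{r_0}=\tfrac{1}{p_1}+\tfrac{1}{p_2}+\tfrac{1}{p_3},
\eeqn
applied with $p_1=d/\a_1$, $p_2=2$, $p_3=d/\a_2$. This gives $T_k\in S_{r_0,\infty}$ with $r_0=2d/(d+2(\a_1+\a_2))$, and the standard embedding $S_{r_0,\infty}\subset S_r$ for $r>r_0$ delivers the theorem. The main obstacle I anticipate is the bookkeeping in the matrix computation: one must track the reversed product of $(\T_\t^d)^{\rm op}$ so that the twisting cocycle $\s$ enters in exactly the same way for $T_k$ and $T_{k'}$, allowing the diagonal Bessel factors to absorb the weights $(1+|m|^2)^{-\a_1/2}$ and $(1+|n|^2)^{-\a_2/2}$ cleanly, with the phase $\s(j,-j)$ cancelling out of the final identification. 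Once this is done, the H\"{o}lder step and the embedding into $S_r$ are routine.
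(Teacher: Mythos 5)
Your proposal is correct and follows essentially the same route as the paper: the identical factorization $T_k=J^{-\a_1}\,T_{(J^{\a_1}\otimes J^{\a_2})k}\,J^{-\a_2}$ (the paper establishes it in the equivalent form $J^{\a_1}T_k=T_{k_1}T_{k_2}$ with $k_1=(J^{\a_1}\otimes J^{\a_2})k$ and $T_{k_2}=J^{-\a_2}$ realized as an integral operator, verified by a chain of trace identities rather than your matrix computation on $\ell_2(\ent^d)$), followed by H\"{o}lder together with $J^{-\a}\in S_{d/\a,\infty}$. The only deviations are minor refinements on your side: the weak-Schatten H\"{o}lder inequality lands you at the endpoint $S_{r_0,\infty}$ with $r_0=\frac{2d}{d+2(\a_1+\a_2)}$ before embedding into $S_r$, and interpreting $J^{0}={\rm id}$ lets you treat $\a_1=0$ or $\a_2=0$ uniformly, whereas the paper works with strong Schatten classes (choosing $\a_i>d/p_i$) and disposes of the critical cases separately, using the adjoint/flip identity $T_k^*=T_{{\rm flip}(k)^*}$ when $\a_2=0$.
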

\begin{proof}
	Let $J^{\a}$ denote the Bessel potential of order $\a$ on quantum tori, then $J^{\a}$ can be viewed as a Fourier multiplier on $L^2(\T_\t^d)$ with symbol $(1+|n|^2)^{\a/2}$ for $n \in \mathbb{Z}^d$. And we get that $J^{ {-\a_2}} \in S_{p_2}(L^2(\T_\t^d))$ if 
	$$ \a_2 > \frac {d}{p_2}.$$
	If $x \in L^2(\T_\t^d)$, we can  write $x$ formally as $x=\sum_{n \in \mathbb{Z}^d}\wh{x}(n)U^n$, which yields that
	\be
	\begin{split}
		J^{ {-\a_2}}(x)&=\sum_{n \in \mathbb{Z}^d}(1+|n|^2)^{\frac {-\a_2}{2}}\wh{x}(n)U^n\\
		&= \sum_{n \in \mathbb{Z}^d}(1+|n|^2)^{\frac {-\a_2}{2}}\tau((U^n)^*x) U^n\\
		&= \sum_{n \in \mathbb{Z}^d}(1+|n|^2)^{\frac {-\a_2}{2}}({\rm id}\otimes \tau)((U^n\otimes (U^n)^*)(1\otimes x))\\
		&= ({\rm id}\otimes \tau)( \sum_{n \in \mathbb{Z}^d}(1+|n|^2)^{\frac {-\a_2}{2}}(U^n\otimes (U^n)^*)(1\otimes x)),
	\end{split}
	\ee
which means that $J^{-\a_2}(x)=T_{k_2}(x)$ for
$$k_2=\sum_{n \in \mathbb{Z}^d}(1+|n|^2)^{\frac {-\a_2}{2}}(U^n\otimes (U^n)^*).$$
Thus we have determined the kernel $k_2$ for the operator $J^{-\a_2}$ and $J^{\a_2}T_{k_2}(x)=x$.

We observe that for $x \in L^2(\T_\t^d)$,
\be
\begin{split}
	\tau((U^n)^*J^{\a_1}T_k(x))&=\tau((U^n)^*J^{\a_1}({\rm id}\otimes \tau)(k(1\otimes x)))\\
	&=\tau((U^n)^* (J^{\a_1}\otimes \tau)(k(1\otimes x)))\\
	&= \tau((U^n)^*({\rm id}\otimes \tau)(J^{\a_1}\otimes {\rm id})(k(1\otimes x)))\\
	&= \tau (({\rm id}\otimes \tau)((U^n)^*\otimes 1)(J^{\a_1}\otimes {\rm id})(k(1\otimes x)))\\
	&= \tau \otimes \tau (((U^n)^*\otimes 1)(J^{\a_1}\otimes {\rm id})(k(1\otimes x))\\
	&= \tau \otimes \tau(((U^n)^*\otimes 1)(J^{\a_1}\otimes {\rm id})(k(1\otimes J^{\a_2}T_{k_2}(x))))\\
	&= \tau \otimes \tau (((U^n)^*\otimes 1)(J^{\a_1}\otimes {\rm id})k({\rm id} \otimes J^{\a_2})(1\otimes T_{k_2}(x)))\\
	&= \langle ((U^n)^*\otimes 1)(J^{\a_1}\otimes {\rm id})k, ({\rm id} \otimes J^{\a_2})(1\otimes T_{k_2}(x))^*\rangle\\
	&= \langle ({\rm id} \otimes J^{\a_2})((U^n)^*\otimes 1)(J^{\a_1}\otimes {\rm id})k, (1\otimes T_{k_2}(x))^*\rangle\\
	&= \langle ((U^n)^*\otimes 1)({\rm id} \otimes J^{\a_2})(J^{\a_1}\otimes {\rm id})k, (1\otimes T_{k_2}(x))^*\rangle \\
	&= \tau \otimes \tau((U^n)^*\otimes 1)(J^{\a_1}\otimes J^{\a_2})k(1 \otimes T_{k_2}(x)))\\
	&=\tau((U^n)^*({\rm id}\otimes \tau)((J^{\a_1}\otimes J^{\a_2})k(1 \otimes T_{k_2}(x))))\\
	&= \tau((U^n)^*T_{(J^{\a_1}\otimes J^{\a_2})k}(T_{k_2}x)),
\end{split}
\ee 
where the above inner product $\langle \cdot, \cdot \rangle$ on $L^2(\T_\t^d \otimes (\T_\t^d)^{\rm op})$ is induced by the trace $\tau \otimes \tau$, and the inner product makes sense since we know that $(J^{\a_1}\otimes J^{\a_2})k \in L^2(\T_\t^d \otimes (\T_\t^d)^{\rm op})$, thus $((U^n)^*\otimes 1)(J^{\a_1}\otimes J^{\a_2})k \in L^2(\T_\t^d \otimes (\T_\t^d)^{\rm op}) $. 

Now we set $k_1=(J^{\a_1}\otimes J^{\a_2})k$, we have demonstrated that 
$$J^{\a_1}T_k= T_{k_1}T_{k_2}.$$
By the assumption that $k \in H^{\a_1,\a_2}(\T_\t^d \otimes (\T_\t^d)^{\rm op})$ we know that $k_1 \in L^2(\T_\t^d \otimes (\T_\t^d)^{\rm op})$ and thus $T_{k_1} \in S_2$. With the restriction on $\a_2$, $T_{k_2} \in S_{p_2}$. By the Hölder inequality, we see that,
$$J^{\a_1}T_k \in S_t,$$
with $\frac 1 t = \frac 1 2 + \frac 1 {p_2} $.

On the other hand, since $J^{-\a_1} \in S_{p_1}$ when $\a_1 > \frac d {p_1}$, we deduce that
$$ T_k=J^{-\a_1} J^{\a_1}T_k \in S_r$$
with 
$$\frac 1 r = \frac 1 t + \frac 1 {p_1}= \frac 1 2 + \frac 1 {p_1}+ \frac 1 {p_2}.$$
Engaging the inequalities
$$\a_1 > \frac d {p_1} \ \text{and} \ \a_2 > \frac d {p_2},$$
this is equivalent to saying that
$$r > \frac {2d} {d+2(\a_1+\a_2)} .$$
Now, we conclude the proof by discussing the critical cases:
\item(1). If $\a_1=0$ and $\a_2 >0$, just by eliminating the operator $J^{\a_1}$ in the above reasoning, the rest proceed in the same scheme, 
$$ T_k \in S_t,$$
with $\frac 1 t = \frac 1 2 + \frac 1 {p_2} $ and $\a_2 > \frac d {p_2}$, we deduce that
$$t > \frac {2d} {d+2\a_2} .$$
\item(2). If $\a_2=0$ and $\a_1 >0$, we proceed by taking the adjoint of the operator $T_k$, and by \cite[Lemma 2.2]{GJP21}, $T_k^*= T_{{\rm filp}(k)^*}$ with ${\rm flip}(a\otimes b)=b \otimes a$. Thus, the positions of $\a_1$ and $\a_2$ can be exchanged in the precedent proof, repeating mutatis mutandi, we reach that $T_k^*=T_{{\rm filp}(k)^*} \in S_t$, with $\frac 1 t = \frac 1 2 + \frac 1 {p_1} $ and $\a_1 > \frac d {p_1}$, this is a consequence of case (1) and by engaging the fact that $\|T_k\|_{S_t}=\|T_k^*\|_{S_t}$.
\item(3). If $\a_1=\a_2=0$, then the situation is trivial since $k \in L^2(\T_\t^d \otimes (\T_\t^d)^{\rm op})$, it follows that
$$T_k \in S_2 \subset S_r, \ r>2.$$ 
Now the proof is complete.
\end{proof}

\medskip
\section{Acknowledgements}
The authors were supported by the FWO Odysseus 1 grant G.0H94.18N: Analysis and Partial Differential Equations, the Methusalem programme of the Ghent University Special Research Fund (BOF) (Grant number 01M01021). Michael Ruzhansky is also supported by EPSRC grant EP/V005529/1 and FWO Senior Research Grant G022821N.

\end{document}